\newtheorem{thm}{Theorem}
\newtheorem{lemma}{Lemma}
\newtheorem{cor}[thm]{Corollary}
\begin{document}

\title[Functions with no unbounded Fatou components]{Functions with no unbounded fatou components }
\author[R. Kaur]{Ramanpreet Kaur}
\address{Department of Mathematics, University of Delhi,
Delhi--110 007, India}

\email{preetmaan444@gmail.com}

\begin{abstract}
For a transcendental entire function, a partial affirmative answer to Baker's question on the boundedness of its Fatou components is given. In addition, we have addressed Wang's question on Fej\'er gaps. Certain results about functions with  Fabry gaps and of infinite order have also been generalized.
\end{abstract}
\keywords{transcendental entire function, Fatou set, unbounded Fatou components}
\subjclass[2020]{30D05, 37F10}
\maketitle

\section{Introduction and Preliminaries}
\label{sec1}
This article investigates the question raised by Baker for transcendental entire functions, which says, whether every Fatou component of a transcendental entire function is bounded when the function is of sufficiently small growth. 
Let us recall some basic definitions that we will be using throughout this article.

Let $f$ be a transcendental entire function and let $f^n$ denote the $n\text{-th}$ iterate of $f$. 
The Fatou set $F(f)$ is defined to be the set of all $z\in \mathbb{C}$ such that $\{f^n\}_{n\in\mathbb{N}}$ forms a normal family in some neighbourhood of $z$. 
The complement $J(f)$ of $F(f)$ is called as the Julia set of $f$. 
The basic properties of these sets can be found in \cite{beardon,berg,hua}.

The order of growth $\rho$, lower order of growth $\lambda$, and type $\sigma$ of a transcendental entire function $f$ are defined as follows:
\begin{align*}
\rho&= \limsup_{r\to\infty}\frac{\log \log M(r,f)}{\log r}, \\
\lambda&= \liminf_{r\to\infty}\frac{\log \log M(r,f)}{\log r}, \text{ and}\\
\sigma&=\limsup_{r\to\infty}\frac{\log M(r,f)}{r^{\rho}},
\end{align*}
 where  $M(r,f)=\max\{|f(z)|: |z|=r\}$.
 
The growth of $f$ is said to be of minimal type if $\sigma=0$, mean type if $\sigma\in(0,\infty)$, and maximal type if $\sigma=\infty$.
Also, the minimum modulus of $f$ on $|z|=r$ is denoted by $L(r,f)$, i.e.,
\[L(r,f)=\min\{|f(z)|: |z|=r\}.\]

 For each $r\geq 0$, the maximum term $\mu(r,f)$ is defined as
\[ \mu(r,f)= \max\{|a_k|r^k: k=0,1,2,...\},\]
 where $a_k$'s are the coefficients of $f$ in its power series expansion around zero.
 For $r>0$, the central index $\nu(r,f)$ is defined as the largest $k$ for which $|a_k|r^k=\mu(r,f).$

In \cite{baker1}, Baker observed that for the function
\[f(z)=\frac{\sin z^{\frac{1}{2}}}{z^{\frac{1}{2}}}+z+a,\] where $a$ is sufficiently large, $F(f)$  contains a segment $[x_0,\infty)$ of the positive real axis. This function is of order $\frac{1}{2}$, mean type. Hence, the sufficiently small growth condition appears to be of order $\frac{1}{2}$, minimal type at most.

Along with Baker, several researchers gave an affirmative answer to this question by assuming certain regularity conditions on the function $f$. We list below some of these results.

Baker proved that if $f$ satisfies the condition that
\[\log M(r,f)=O((\log r)^p), \text{ for some }  p\in(1,3),\] 
 then every component of $F(f)$ is bounded.

Stallard improved the growth condition to
\[\log \log M(r,f)<\frac{(\log r)^{\frac{1}{2}}}{(\log \log r)^c},\text{ for some } c\in(0,1)\]
for sufficiently large $r$ \cite{stallard}.  
In the same paper, it is also proved that if $\rho<\frac{1}{2}$ and 
\[\frac{\log \log M(2r,f)}{\log M(r,f)}\to c\not= \infty \text{ as }r\to\infty,\] then every component of $F(f)$ is bounded.

In \cite{zheng}, Zheng proved that there are no unbounded periodic Fatou components if the growth of $f$ is at most of order $\frac{1}{2}$, minimal type. Anderson and Hinkannen used the notion of self sustaining spread to prove some results in this direction \cite{anderson}.  Rippon and Stallard also gave some sufficient conditions which imply that $F(f)$ has no unbounded Fatou component \cite{rippon}. They also improved some results of Hinkannen, Wang \cite{Wang1} and of several other authors. In addition, they proved that Eremenko's conjecture holds for functions with no unbounded Fatou components.  Thus, Baker's question is still open for functions with wandering domains.
\section{Main Result}
In this article, we prove the following:
\begin{thm}\label{THM1}
Let $f$ be a transcendental entire function. If for given $\epsilon>0$, the condition
\[\log L(r,f)> (1-\epsilon)\log  M(r,f)\] is satisfied for every $r$ outside a set of logarithmic density zero, then $F(f)$ has no unbounded components.
\end{thm}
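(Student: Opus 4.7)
I would argue by contradiction. Suppose $U$ is an unbounded Fatou component of $f$, and let $E\subset(0,\infty)$ denote the exceptional set of logarithmic density zero on which the hypothesis $\log L(r,f)>(1-\epsilon)\log M(r,f)$ fails. Since $(0,\infty)\setminus E$ has logarithmic density one and $U$ is unbounded and connected, I can select a sequence $r_k\to\infty$ with $r_k\notin E$ together with points $z_k\in U\cap\{|z|=r_k\}$.

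The first substantive step is to conclude that $U$ lies in the fast escaping set $A(f)$ of Bergweiler and Hinkkanen. At each $z_k$ the hypothesis yields
\[|f(z_k)|\geq L(r_k,f)\geq M(r_k,f)^{1-\epsilon},\]
which grows faster than any polynomial in $r_k$ because $f$ is transcendental. Combining this with the normality of $\{f^n\}$ on $U$, the standard classification of limit functions on Fatou components, and the connectedness of $U$, one deduces that $f^n\to\infty$ locally uniformly on $U$, so $U\subset I(f)$. Iterating the estimate along orbits whose successive moduli can, by the density-zero property of $E$, be arranged to remain in the good set, I would upgrade this to $U\subset A(f)$.

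Having placed $U$ inside $A(f)$, I would invoke the Rippon--Stallard theory (\cite{rippon}): every Fatou component contained in $A(f)$ is either bounded or a multiply connected wandering domain. Since $U$ is unbounded, $U$ must be multiply connected wandering. The main obstacle -- and what I expect to be the crux of the proof -- is to derive a contradiction from this final possibility. A multiply connected wandering domain possesses the nested annular structure of Baker and Bergweiler--Rippon--Stallard: each iterate $U_n=f^n(U)$ contains a round annulus $\{\rho_n<|z|<R_n\}$ with $R_n/\rho_n\to\infty$ on which $f$ behaves as an approximate monomial of high degree, and the essential loops of $U_n$ enclose increasingly many zeros of $f$. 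A quantitative comparison via Jensen's formula shows that the presence of these zeros forces $\log L(r,f)/\log M(r,f)$ to dip below $1-\epsilon$ on a set of radii of positive logarithmic density; this contradicts the hypothesis that all such radii lie in the density-zero set $E$, and the theorem follows.
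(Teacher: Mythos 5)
There is a genuine gap, and it is fatal. The central problem is your appeal to a ``Rippon--Stallard dichotomy'' that does not exist: the cited paper \cite{rippon} contains no theorem asserting that a Fatou component contained in the fast escaping set $A(f)$ is either bounded or a multiply connected wandering domain. What is actually known is (i) Baker's theorem that \emph{every} multiply connected Fatou component of a transcendental entire function is automatically a \emph{bounded} wandering domain, and (ii) the Bergweiler--Hinkkanen/Rippon--Stallard result that a Fatou component meeting $A(f)$ lies in $A(f)$ and must be wandering. Combined with (i), the dichotomy you invoke is equivalent to the assertion that every Fatou component inside $A(f)$ is bounded --- which is essentially the kind of statement the theorem is trying to establish, not something you may cite; nothing you quote rules out an unbounded \emph{simply connected} component in $A(f)$. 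Your proposed ``crux'' also collapses under (i): an unbounded multiply connected wandering domain cannot exist at all, so the Jensen-formula analysis you sketch is aimed at an empty case, while the dangerous cases (an unbounded simply connected wandering domain in $A(f)$, or an unbounded invariant component that your first step failed to exclude) are never addressed.

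The earlier steps have real gaps as well. From $|f(z_k)|\geq M(r_k,f)^{1-\epsilon}$ at points $z_k\in U$ with $|z_k|=r_k\to\infty$ you cannot conclude that $f^n\to\infty$ locally uniformly on $U$: this is a statement about the \emph{first} iterate at points running off to infinity, and it places no constraint on the limit functions of $\{f^n\}$ on a fixed compact subset of $U$; normality and the classification of limit functions do not bridge that. Likewise, the ``upgrade'' to $U\subset A(f)$ by arranging the orbit moduli to avoid the exceptional set $E$ is not an argument --- the orbit of a point is determined by $f$, not by choice. What the hypothesis really yields, and what the paper uses, is Singh's observation \cite{Singh} that for every large $r$ there exists $\sigma\in[r,r^{\alpha}]$ with $L(\sigma,f)=M(r,f)$. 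The paper then constructs two comparison sequences $R_n$ and $S_n$ with $S_n\leq R_n^{1/(2\alpha)}$, uses Hinkkanen's criterion \cite{hinka} (large minimum modulus on a sequence of circles forces the image of an unbounded Fatou component to be unbounded), and runs a path-crossing argument: a curve $\gamma\subset U$ joining the circle $|z|=S_n$ to a much larger circle has images $f^k(\gamma)$ joining $|z|=S_{n+k}$ to correspondingly larger circles, so the distortion lemma \cite[Lemma 5]{baker1} forces $M(S_{n+k},f)=S_{n+k+1}<BS_{n+k}^{C}$, contradicting the transcendence of $f$. Your outline replaces this quantitative core with citations that cannot carry the weight, so the proof does not go through.
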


The above theorem generalizes \cite[Theorem 1]{Singh} and \cite[Theorem 1]{Wang1}. 
In \cite{Singh}, Singh considered $f$ as a composition of two transcendental entire functions of positive order with certain conditions on them; meanwhile Wang's paper \cite{Wang1} deals with functions of finite order possessing Fabry gaps with positive lower order.  
Note that functions of finite order with Fabry gaps certainly satisfy the hypothesis of \Cref{THM1} \cite{fuchs}.
In the same paper, Wang also proposed the question whether a function with Fej\'er gaps has no unbounded Fatou components. 
\Cref{THM1} gives an affirmative answer to this question also, as a function with Fej\'er gaps satisfies the condition of \Cref{THM1} \cite{murai}.    

In the proof of \Cref{THM1}, we need two real sequences $\{R_n\}$ and $\{S_n\}$ satisfying the following properties:
\begin{enumerate}
\item For any $\alpha>1$, we have $R_{n+1}=M(R_n^{\frac{1}{2\alpha}},f)$, $S_{n+1}=M(S_n,f)$, and
\item $S_n\leq R_n^{\frac{1}{2\alpha}}$, for every sufficiently large $n$.
\end{enumerate}

Before considering the construction of the sequences in the most general case of $f$ being any transcendental entire function, we shall illustrate a construction for a special case as given below.

Suppose that $f$ is of positive lower order and having finite order. 
As $\frac{\rho}{\lambda}>0$, there exists $n_{\lambda,\rho}\in\mathbb{N}$ such that $n_{\lambda,\rho}>\frac{\rho}{\lambda}$. 
Now, consider
\begin{align*}
\liminf_{r\to\infty}\frac{\log \log M(r^{\frac{1}{2\alpha}},f)^{\frac{1}{16 \alpha^4 n_{\lambda,\rho}}}}{\log r^{2\alpha}}&=\liminf_{r\to\infty}\left(\frac{\log \frac{1}{16\alpha^4n_{\lambda,\rho}}}{\log r^{2\alpha}}+\frac{\log\log M(r^{\frac{1}{2\alpha}},f)}{\log r^{2\alpha}}\right) \\
&=\liminf_{r\to\infty}\frac{\log\log M(r^{\frac{1}{2\alpha}},f)}{\log r^{2\alpha}} \\
&= \frac{\lambda}{4\alpha^2} \text{\quad(by the definition of lower order).}
\end{align*}
Hence, there exists $r_3>0$ such that
\[\frac{\log \log M(r^{\frac{1}{2\alpha}},f)^{\frac{1}{16 \alpha^4 n_{\lambda,\rho}}}}{\log r^{2\alpha}}\geq \frac{\lambda}{8\alpha^2} \text{\quad for every } r\geq r_3.\]

On similar lines, using the definition of order of $f$,
there exists $r_4>0$ such that
 \[ \log \log M(S_n,f)^{\frac{1}{2\alpha}}\leq 2\rho\log S_n \text{ for every }r\geq r_4.\]

Now, take $R_1,S_1> \max\{r_0,r_1,r_3,r_4\}$ such that $R_1^{\frac{1}{16\alpha^4n_{\lambda,\rho}}}\geq S_1^{\frac{1}{2\alpha}}$, where $r_0,r_1$ are chosen from the steps 1 and 2 of the proof of \Cref{THM1} (to follow). 
Further, for any $\alpha>1$, set $R_{n+1}=M(R_n^{\frac{1}{2\alpha}},f)$ and $S_{n+1}=M(S_n,f)$. 
To prove $(2)$, it is sufficient to show that $S_n^{\frac{1}{2\alpha}}\leq R_n^{\frac{1}{16\alpha^4n_{\lambda,\rho}}}$ for every $n\in\mathbb{N}$, which we shall prove by induction on $n$.\\
Consider
\begin{align*}
\frac{\log \log M(R_n^{\frac{1}{2\alpha}},f)^{\frac{1}{16 \alpha^4 n_{\lambda,\rho}}}}{\log \log M(S_n,f)^{\frac{1}{2\alpha}}}&\geq \frac{\frac{\lambda}{8\alpha^2}\log R_n^{2\alpha}}{2\rho\log S_n} \\
&\geq \frac{\lambda \log R_n^{2\alpha}}{16\alpha^2 \rho\log S_n^{\alpha^2}}\\
&\geq \frac{\log R_n}{16\alpha^4n_{\lambda,\rho}\log S_n^{\frac{1}{2\alpha}}}\\
&=\frac{\log R_n^{\frac{1}{16\alpha^4n_{\lambda,\rho}}}}{\log S_n^{\frac{1}{2\alpha}}}\geq 1.
 \end{align*}

Consequently, \[S_{n+1}^{\frac{1}{2\alpha}}=M(S_n,f)^{\frac{1}{2\alpha}}\leq M(R_n^{\frac{1}{2\alpha}},f)^{\frac{1}{16 \alpha^4 n_{\lambda,\rho}}}= R_{n+1}^{\frac{1}{16\alpha^4n_{\lambda,\rho}}},\] which completes the induction process.

General Case: Here, lower order of $f$ can be zero or order of $f$ can be infinity, and hence we can not apply the method as done above.
To this end, we first make the following observations.

Using \cite[Lemma 2.2.7]{langley}, we may choose a constant $K=\log \mu(s,f)\geq 2$ for sufficiently large $s\geq 1 $ such that 
\[\log \mu(r,f)\leq \nu(r,f)\log r +K \text{ for  sufficiently large }r.\]
Replacing $r$ with $2r$, we get 
\begin{equation}\label{1}
\log \mu(2r,f)\leq \nu(2r,f)\log 2r +K \text{ for  sufficiently large }r.
\end{equation}
Now, for $r>0$, by \cite[Lemma 2.2.2]{langley}, we have $M(r,f)\leq 2\mu(2r,f)$. Taking $\log$ on both sides, we get
 \[\log M(r,f)\leq \log 2+ \log \mu(2r,f) \text{ for  sufficiently large }r.\]
This further implies  that 
\[\log 2 M(r,f)\leq 2 \log 2 +  \log \mu(2r,f) \text{ for  sufficiently large }r.\]
Hence,
\begin{align*}
\log 2 M\left(r,f\right) & \leq 2 \log 2+ \nu(2r,f)\log 2r +K\\
& \leq K' + \nu(2r,f)\log 2r \\
& = \log K'' + \nu(2r,f)\log 2r \\
& = \log( K'' 2r^{\nu(2r,f)} )
\end{align*}
 for sufficiently large $r$. Therefore, there exists $s_0>0$ such that 
 \[\log 2 M\left(r,f\right)\leq \log( K'' (2r)^{\nu(2r,f)} ) \text{ for every } r\geq s_0.\]
 
 Again, using \cite[Lemma 2.2.7]{langley}, we have 
\[ \log \mu(r,f)+\nu(r,f)\log r\leq \log \mu(r^2,f), \text{ for sufficiently large } r.\]
  Using \cite[Lemma 2.2.2]{langley}, we obtain 
  \begin{equation}\label{2}
 \log \mu(r,f)+\nu(r,f)\log r\leq \log M(r^2,f) \text{ for sufficiently large } r.
  \end{equation}
 This gives us the existence of $s_1>0$ such that 
 \[\log \mu(r,f)+\nu(r,f)\log r\leq \log M(r^2,f) \text{ for every } r\geq s_1.\]
Choose $R_1, S_1$ such that $R_1^{\frac{1}{4\alpha}}\geq 2S_1\geq \max\{s_0,s_1,r_1,r_2\}$ and $\mu(R_1^{\frac{1}{4\alpha}},f)\geq K''$. For  
$n\in\mathbb{N}$, define $R_{n+1}=M(R_n^{\frac{1}{4\alpha}},f),$ 
 and $ S_{n+1}=M(S_n,f).$ 
\begin{lemma}\label{lem1}
There exists a sequence $\{k_n\}$ in $(0,1)$ such that 
\[ a_n:=\frac{\nu(R_n^{\frac{1}{4\alpha}},f)}{\nu((8S_n)^{2k_n},f)(\log M(4S_n^2,f))^2}\] satisfies $k_n\leq a_n$ for every $n\in\mathbb{N}$, and $(8S_n)^{2k_n}$ approaches to some finite number as $n$ tends to infinity.
\end{lemma}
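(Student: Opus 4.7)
My plan is to choose $k_n$ explicitly so as to force $(8S_n)^{2k_n}$ to be identically a fixed constant $C>1$, hence trivially convergent to $C$. Concretely, set $k_n = \log C/(2\log(8S_n))$; since $S_n \to \infty$, this lies in $(0,1)$ for all $n$ (after arranging $S_1$ large enough). With this choice $\nu((8S_n)^{2k_n}, f) = \nu(C, f)$ is constant in $n$, and the inequality $k_n \leq a_n$ becomes
\[
\nu(C, f)\log C \cdot (\log M(4S_n^2, f))^2 \leq 2 \nu(R_n^{1/(4\alpha)}, f) \log(8 S_n).
\]

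To verify this, I would invoke the estimate $\log M(r,f) \leq \nu(2r,f)\log(2r) + K''$ derived just before the lemma (from \eqref{1} combined with \cite[Lemma 2.2.2]{langley}), applied with $r = 4 S_n^2$, to bound the left-hand side from above by a constant multiple of $\nu(C, f)\log C \cdot \nu(8S_n^2, f)^2 (\log(8S_n))^2$. The problem then reduces to showing
\[
\nu(R_n^{1/(4\alpha)}, f) \gtrsim \nu(8 S_n^2, f)^2 \log(8S_n).
\]
For this I would use \eqref{2}, $\log\mu(r,f)+\nu(r,f)\log r\leq \log M(r^2,f)$, at an appropriate scale to bring in $\log R_{n+1} = \log M(R_n^{1/(4\alpha)},f)$, together with the transcendentality of $f$ (which forces $\log M(r,f)$ to grow faster than any polynomial in $\log r$). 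The initial condition $R_1^{1/(4\alpha)} \geq 2S_1$ and the recursions $R_{n+1}=M(R_n^{1/(4\alpha)},f)$, $S_{n+1}=M(S_n,f)$ then allow propagating the comparison through all $n$.

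The main obstacle is precisely this central-index comparison $\nu(R_n^{1/(4\alpha)},f)\gtrsim\nu(8S_n^2,f)^2\log(8S_n)$, which must be maintained inductively in a setting where no finite order assumption on $f$ is available. This is the general-case analog of the special-case analysis carried out immediately before the lemma (where the finite ratio $\rho/\lambda$ plays the role of the comparison), and is where the preparatory estimates \eqref{1} and \eqref{2}, derived from \cite{langley}, play their essential role. Should this direct comparison be too tight to establish for every $n$, a fallback is to define $k_n$ as the largest value in $(0,1)$ satisfying $k\leq a_n(k)$, using monotonicity of $\nu$ in its first argument to guarantee existence, and then to apply the same two estimates to show $k_n\log(8S_n)$ is uniformly bounded; this gives boundedness of $(8S_n)^{2k_n}$, and convergence may be recovered by passing to a subsequence or by a further monotone refinement of the choice of $k_n$.
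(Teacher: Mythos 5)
Your primary route has a genuine gap, and it is the one you flag yourself: the comparison $\nu(R_n^{1/(4\alpha)},f)\gtrsim \nu(8S_n^2,f)^2\log(8S_n)$ is never proved, and it cannot be proved at this point in the paper. The only information available about $\{R_n\}$ versus $\{S_n\}$ when the lemma is invoked is the initial normalization $R_1^{1/(4\alpha)}\geq 2S_1$ and the two recursions; the domination $R_n^{1/(4\alpha)}\geq 2S_n$ for all $n\geq n_1$ is established only \emph{after} the lemma, by an induction that uses both conclusions of the lemma (the inequality $k_n\leq a_n$ and the convergence of $(8S_n)^{2k_n}$, the latter entering through the constant $m_0$). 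So your plan runs the logic backwards: proving the lemma your way requires a quantitative $R_n$-versus-$S_n$ separation, while the lemma exists precisely so that such a separation can be derived afterwards. Moreover, even the separation the paper eventually obtains is far too weak for your inequality: $R_n^{1/(4\alpha)}\geq 2S_n$ only yields $\nu(R_n^{1/(4\alpha)},f)\geq \nu(2S_n,f)$, and monotonicity of $\nu$ gives $\nu(2S_n,f)\leq\nu(8S_n^2,f)$, which is nowhere near the squared quantity $\nu(8S_n^2,f)^2\log(8S_n)$ you need. Fixing $(8S_n)^{2k_n}=C$ makes the convergence trivial, but it converts $k_n\leq a_n$ into a strong growth statement along the orbit that is unavailable for a general transcendental entire function (no order or lower-order hypotheses are in force here; that is exactly what distinguishes this lemma from the special case treated before it).

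The paper sidesteps the issue with a device that costs no growth information at all: choose $l_n$ with $(8S_n)^{2l_n}\to b$, note that $\nu((8S_n)^{2l_n},f)$ is eventually the constant $\nu(b,f)$, and set $k_n=\min\{a_n,l_n\}$; since decreasing $k$ can only increase $a_n(k)=\nu(R_n^{1/(4\alpha)},f)/\bigl[\nu((8S_n)^{2k},f)(\log M(4S_n^2,f))^2\bigr]$ (monotonicity of $\nu$ again), the inequality $k_n\leq a_n$ holds essentially by construction. Your fallback is in exactly this spirit and is the better of your two routes, but as stated it also breaks: if $k_n$ is the largest $k\in(0,1)$ with $k\leq a_n(k)$, then whenever $R_n$ is enormously larger than $S_n$ (which nothing in the construction prevents, and which is the intended regime) one has $a_n(k)\geq 1$ for every $k\in(0,1)$, the supremum is $1$, and $(8S_n)^{2k_n}$ is comparable to $(8S_n)^2\to\infty$; the uniform bound on $k_n\log(8S_n)$ you hope to extract from the Langley-type estimates is therefore not provable. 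The repair is not to prove boundedness but to cap the extremal choice by $l_n$, i.e.\ to take a minimum as the paper does; your parenthetical ``monotone refinement'' is the right instinct, since shrinking $k_n$ preserves $k_n\leq a_n(k_n)$ and so one can force $(8S_n)^{2k_n}$ to converge. Note finally that passing to a subsequence is not an available fix: the lemma is used at every index $n\geq n_1$ in the subsequent induction, so both properties must hold along the full sequence.
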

\begin{proof}
Firstly, we choose a sequence $\{l_n\}$ of real numbers such that $(8S_n)^{2l_n}$ approaches to some $b$ as $n$ tends to infinity. On using right continuity of $\nu(r,f)$ at $b$, we have the existence of $\delta>0$ such that 
$\nu((8S_n)^{2l_n},f)= \nu(b,f)$ for every $n$ satisfying $b<(8S_n)^{2l_n}< b+\delta$.

Now, consider $\frac{\nu(R_n^{\frac{1}{4\alpha}},f)}{(\log M(4S_n^2,f))^2}= b_n$ (say). The above observations gives us that
\[a_n=\frac{b_n}{\nu(b,f)} \text{ for sufficiently large }n.\]
Now, for every $n\in\mathbb{N}$, define $k_n= \min\{a_n,l_n\}$. From the definition, it is clear that $\{k_n\}$ satisfies the required properties, i.e., $k_n\leq a_n $ and $(8S_n)^{2k_n}$ approaches to some finite number, say $a$ as $n$ tends to infinity.
\end{proof}

Now, using the above observations and \Cref{lem1}, we shall prove that 
 \[\nu(2S_n,f)\leq \frac{a_n\nu((8S_n)^{2 k_n},f)(\log M(4S_n^2,f))^2}{4\alpha} \text{ for sufficiently large  } n.\]
 For this, consider
 \begin{align}
 \frac{\nu(2S_n,f)}{\nu((8 S_n)^{2 k_n},f)(\log M(4S_n^2,f))^2}& \leq \frac{K(\log M(4S_n^2,f)-\log \mu(2S_n,f)) \log (8S_n)^{2k_n}}{\log 2S_n \log M\left(\frac{(8S_n)^{2k_n}}{2},f\right)(\log M(4S_n^2,f))^2}\label{3}\\
 & = \frac{K \log M(4S_n^2,f)\log(8S_n)^{2k_n}}{\log 2S_n \log M\left(\frac{8S_n)^{2k_n}}{2},f\right)(\log M(4S_n^2,f))^2}\notag\\
 &\quad -\frac{K\log \mu(2S_n,f)}{\log 2S_n \log M\left(\frac{(8S_n)^{2k_n}}{2},f\right) (\log M(4S_n^2,f))^2}\notag\\
 & \leq \frac{K \log M(4S_n^2,f)\log (2S_n)^{6k_n} }{\log 2S_n\log M\left(\frac{(8S_n)^{2k_n}}{2},f\right)(\log M(4S_n^2,f))^2}\notag\\
 &= \frac{6 K k_n \log 2S_n }{\log 2S_n \log M\left(\frac{(8S_n)^{2k_n}}{2},f\right)\log M(4S_n^2,f)}\notag\\
 & = \frac{6 K a_n}{m_0\log M(4S_n^2,f)}\label{4} \\
 & \leq \frac{a_n}{4\alpha},\label{5} 
   \end{align}
    for sufficiently large $n$.

 This means that there exists $n_1\in\mathbb{N}$ such that 
 \[\nu(2S_n,f)\leq \frac{a_n\nu((8S_n)^{2 k_n},f)\log (8S_n)^{2k_n}} {4\alpha} \text{ for every }n\geq n_1.\]
  Justification for the deduction on each line of the above multiline equation:
  \begin{itemize}
  \item  On the first line (\Cref{3}), we have used 
\Cref{1}, \Cref{2} and  the following inequality:
for $\beta>1$, we have
$x-\beta\geq \frac{x}{\beta}, \text{ for sufficiently large }x.$
\item For the fifth inequality (\Cref{4}), using \Cref{lem1}, we get $k_n\leq a_n$ and $\log M\left(\frac{(8S_n)^{2k_n}}{2},f\right)$ approaches to $\log M\left(\frac{a}{2},f\right)= m_0$ (say).\\
\item For the sixth inequality (\Cref{5}), use the fact that
$\frac{1}{\log M(4S_n^2,f)}$ approaches to zero as $n$ tends to $\infty.$
\end{itemize}
Now, assume that $R_{n_1}^{\frac{1}{4\alpha}}\geq 2S_{n_1}$. Then, for $n> n_1$, consider
  \begin{align*}
\frac{\log M(R_{n_1}^{\frac{1}{2\alpha}},f)^{\frac{1}{4\alpha}}}{\log 2 M(S_{n_1},f)} & \geq \frac{\log \mu( R_{n_1}^\frac{1}{4\alpha},f)^{\frac{1}{4\alpha}}R_{n_1}^{\frac{ \nu(R_{n_1}^{\frac{1}{4\alpha}},f)}{16\alpha^2}}}{\log \left(K'' (2S_{n_1})^{\nu(2S_{n_1},f)}\right)}\\
&\geq \frac{\log \mu( R_{n_1}^\frac{1}{4\alpha},f)^{\frac{1}{4\alpha}}R_{n_1}^{\frac{ \nu(R_{n_1}^{\frac{1}{4\alpha}},f)}{16\alpha^2}}}{\log \left(K'' (2S_{n_1}^{a_{n_1}})^{\frac{a_n\nu((8S_{n_1})^{2k_{n_1}},f)\log (8S_n)^{2k_n}}{4\alpha}}\right)}\geq 1.\\
\end{align*}  
   This gives us that $R_{n_1+1}^{\frac{1}{4\alpha}}\geq 2S_{n_1+1}$. On applying the same process inductively, we get 
  \[R_n^{\frac{1}{4\alpha}}\geq 2S_n \text{ for every }n\geq n_1.\]

We now prove the following result which will also be used in the proof of \Cref{THM1}.

\begin{lemma}\label{lemma1}
Let $f$ be a transcendental entire function and let $m>1$.
Then,
\[M(\log r^{\frac{1}{2m}r^{\frac{1}{2m}}},f)\geq \left(\log M(r^{\frac{1}{2m}},f)^{\frac{1}{2m}M(r^{\frac{1}{2m}},f)^{\frac{1}{2m}}}\right)^{m}\]
for sufficiently large $r$.
\end{lemma}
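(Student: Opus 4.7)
The plan is to first unravel the nested exponents by substituting $R = r^{1/(2m)}$, so that $r = R^{2m}$ and $\log r = 2m\log R$. Then the argument of the $M$ on the left collapses to
\[\log r^{\frac{1}{2m}\,r^{1/(2m)}} \;=\; \tfrac{1}{2m}\,r^{1/(2m)}\log r \;=\; \tfrac{1}{2m}\cdot R \cdot 2m\log R \;=\; R\log R.\]
Writing $A := M(R,f) = M(r^{1/(2m)},f)$, an analogous computation turns the right-hand side into
\[\left(\tfrac{1}{2m}\,A^{1/(2m)}\log A\right)^{m} \;=\; \frac{A^{1/2}\,(\log A)^{m}}{(2m)^{m}}.\]
Hence the whole lemma is equivalent to proving
\[M(R\log R, f) \;\geq\; \frac{M(R,f)^{1/2}\,\bigl(\log M(R,f)\bigr)^{m}}{(2m)^{m}} \quad \text{for every sufficiently large } R.\]

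With the inequality in this reduced form, the rest of the argument is elementary asymptotics. Since $M(\cdot,f)$ is non-decreasing and $R\log R \geq R$ for $R \geq e$, monotonicity gives $M(R\log R, f) \geq M(R,f)$. It therefore suffices to verify
\[M(R,f) \;\geq\; \frac{M(R,f)^{1/2}\,\bigl(\log M(R,f)\bigr)^{m}}{(2m)^{m}},\]
which, after taking $2m$-th roots, simplifies to $2m\cdot M(R,f)^{1/(2m)} \geq \log M(R,f)$. Since $f$ is transcendental entire, $M(R,f) \to \infty$ as $R \to \infty$, and setting $x = M(R,f)$ reduces the matter to the standard fact that $\log x = o\bigl(x^{1/(2m)}\bigr)$ as $x\to\infty$.

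The main obstacle is purely notational: one has to parse the nested exponentiation correctly so that the substitution $R = r^{1/(2m)}$ collapses both sides to the clean form above. Once that bookkeeping is done, neither the constant $(2m)^{m}$ nor the polynomial-in-logarithm factor poses any difficulty, because the maximum modulus of a transcendental entire function dominates every power of its own logarithm. I would therefore present the proof in exactly this order, emphasising the reduction step as the main content and treating the asymptotic estimate as a quick corollary.
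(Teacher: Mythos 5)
Your proof is correct and is essentially the paper's own argument in different notation: both reduce the left-hand side via monotonicity of $M$ (the paper's observation $\log r^{\frac{1}{2m}r^{\frac{1}{2m}}}\geq r^{\frac{1}{2m}}$ is exactly your $R\log R\geq R$ after the substitution $R=r^{\frac{1}{2m}}$), and both rest on the inequality $\log M(R,f)\leq 2m\,M(R,f)^{\frac{1}{2m}}$, which the paper gets from the crude bound $\log x\leq x$ applied at $x=M(R,f)^{\frac{1}{2m}}$ while you invoke $\log x=o\bigl(x^{\frac{1}{2m}}\bigr)$. The only difference is cosmetic: you carry the exact constant $(2m)^{m}$ through the computation, whereas the paper discards it at the outset.
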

\begin{proof}
Consider, 
\begin{align*}
\log M(r^{\frac{1}{2m}},f)^{\frac{1}{2m}M(r^{\frac{1}{2m}},f)^{\frac{1}{2m}}}&= M(r^{\frac{1}{2m}},f)^{\frac{1}{2m}}\log M(r^{\frac{1}{2m}},f)^{\frac{1}{2m}}\\
&\leq M(r^{\frac{1}{2m}},f)^{\frac{1}{2m}} M(r^{\frac{1}{2m}},f)^{\frac{1}{2m}}\\
&= M(r^{\frac{1}{2m}},f)^{\frac{1}{m}}.
\end{align*}
As $\log r^{\frac{1}{2m}r^{\frac{1}{2m}}}\geq r^{\frac{1}{2m}}$ for sufficiently large $r,$ we have $M(\log r^{\frac{1}{2m}r^{\frac{1}{2m}}},f)\geq M(r^{\frac{1}{2m}},f)$ for sufficiently large $r$. This further implies that \[\frac{\log M(r^{\frac{1}{2m}},f)^{\frac{1}{2m}M(r^{\frac{1}{2m}},f)^{\frac{1}{2m}}}}{M(\log r^{\frac{1}{2m}r^{\frac{1}{2m}}},f)^{\frac{1}{m}}}\leq 1\]
for sufficiently large $r$. As a result, we obtain
\[\left(\log M(r^{\frac{1}{2m}},f)^{\frac{1}{2m}M(r^{\frac{1}{2m}},f)^{\frac{1}{2m}}}\right)^{m}\leq M(\log r^{\frac{1}{2m}r^{\frac{1}{2m}}},f)\]
 for sufficiently large $r$.
\end{proof}
\noindent\textbf{Proof of Theorem 1:}
We will prove the result through the following steps:\\
Step 1: Using the given hypothesis, there exist real numbers $\alpha >1$ and  $r_0$ such that for each $r\geq r_0$, there exists $\sigma$ satisfying $r\leq \sigma \leq r^{\alpha}$ and $L(\sigma,f)=M(r,f)$ \cite{Singh}.\\
Step 2: In this step, we will observe some inequalities:
\begin{enumerate}
\item [(i)] Using \Cref{lemma1}, for $m=\alpha$ there exists $r_1$ such that 
\[\left(\log M(r^{\frac{1}{2\alpha}},f)^{\frac{1}{2\alpha}M(r^{\frac{1}{2\alpha}},f)^{\frac{1}{2\alpha}}}\right)^{\alpha}\leq M(\log r^{\frac{1}{2\alpha}r^{\frac{1}{2\alpha}}},f)
\text{ for }r\geq r_1.\] 
\item [(ii)] From \cite[Lemma 2.2]{rippon}, for $c=2\alpha$ there exists $r_2$ such that 
\[M(r^{2\alpha},f)\geq M(r,f)^{2\alpha} \text{ for }r\geq r_2.\]
\end{enumerate}
Step 3: Observe that the sequence $\{\log R_n^{\frac{1}{2\alpha}R_n^{\frac{1}{2\alpha}}}\}$ tends to infinity as $n$ tends to infinity.
Using Step 1, for each $n$ there exists $\sigma_n$ satisfying 
\[ \log R_n^{\frac{1}{2\alpha}R_n^{\frac{1}{2\alpha}}}\leq \sigma_n\leq \left(\log R_n^{\frac{1}{2\alpha}R_n^{\frac{1}{2\alpha}}}\right)^{\alpha}\]
such that $L(\sigma_n,f)=M(\log R_n^{\frac{1}{2\alpha}R_n^{\frac{1}{2\alpha}}},f)$. 
Also, by Step 2, we have
\begin{align*}
L(\sigma_n,f)&= M(\log R_n^{\frac{1}{2\alpha}R_n^{\frac{1}{2\alpha}}},f)\\
&\geq \left(\log M(R_n^{\frac{1}{2\alpha}},f)^{\frac{1}{2\alpha}M(R_n^{\frac{1}{2\alpha}},f)^{\frac{1}{2\alpha}}}\right)^{\alpha}\\
&=\left(\log R_{n+1}^{\frac{1}{2\alpha}R_{n+1}^{\frac{1}{2\alpha}}}\right)^{\alpha}.
\end{align*}
Hence $\liminf_{n\to\infty} L(\sigma_n,f)=\infty$. This gives us that the image of unbounded Fatou component is unbounded \cite{hinka}.\\
Step 4: Suppose that $F(f)$ has an unbounded component, say $U$. 
Without loss of generality, we can assume that $0,1 \in J(f)$ such that $f(0)=1$.
As $U$ is an unbounded component, for sufficiently large $n\geq n_2 (\geq n_1)$, the Fatou component $U$ intersects the following three circles:
\begin{align*}
T_n&=\{z: |z|=S_n\}, \\
T_n^1&=\{z:|z|=\sigma_n\}, \text{ and}\\
T_n^2&=\left\{z:|z|=\left(\log R_{n}^{\frac{1}{2\alpha}R_{n}^{\frac{1}{2\alpha}}}\right)^{\alpha}\right\}.
\end{align*} 
As $U$ is a connected Fatou component, we can join any two points by a path. 
Let $\gamma:[0,1]\to U$ be a path joining the points $z_n\in T_n$ in $U$ and $z_{n+1}^2\in T_{n+1}^2$ in $U$. 
Then $\gamma$ intersects the circle $T_{n+1}^1$ at some point, say $z_{n+1}^1$.
By Step 3, $f(U)$ is an unbounded Fatou component containing the path $f\circ\gamma$.
Also, we have $|f(z_n)|\leq S_{n+1}$ and \[|f(z_{n+1}^1)|\geq \left(\log R_{n+2}^{\frac{1}{2\alpha}R_{n+2}^{\frac{1}{2\alpha}}}\right)^{\alpha}.\] 
These observations imply the existence of two points $z_{n+1}\in T_{n+1}$ and $z_{n+2}^2\in T_{n+2}^2$ which lie on the the path $f\circ\gamma$.
\begin{figure}
\centering
\includegraphics[totalheight=8cm]{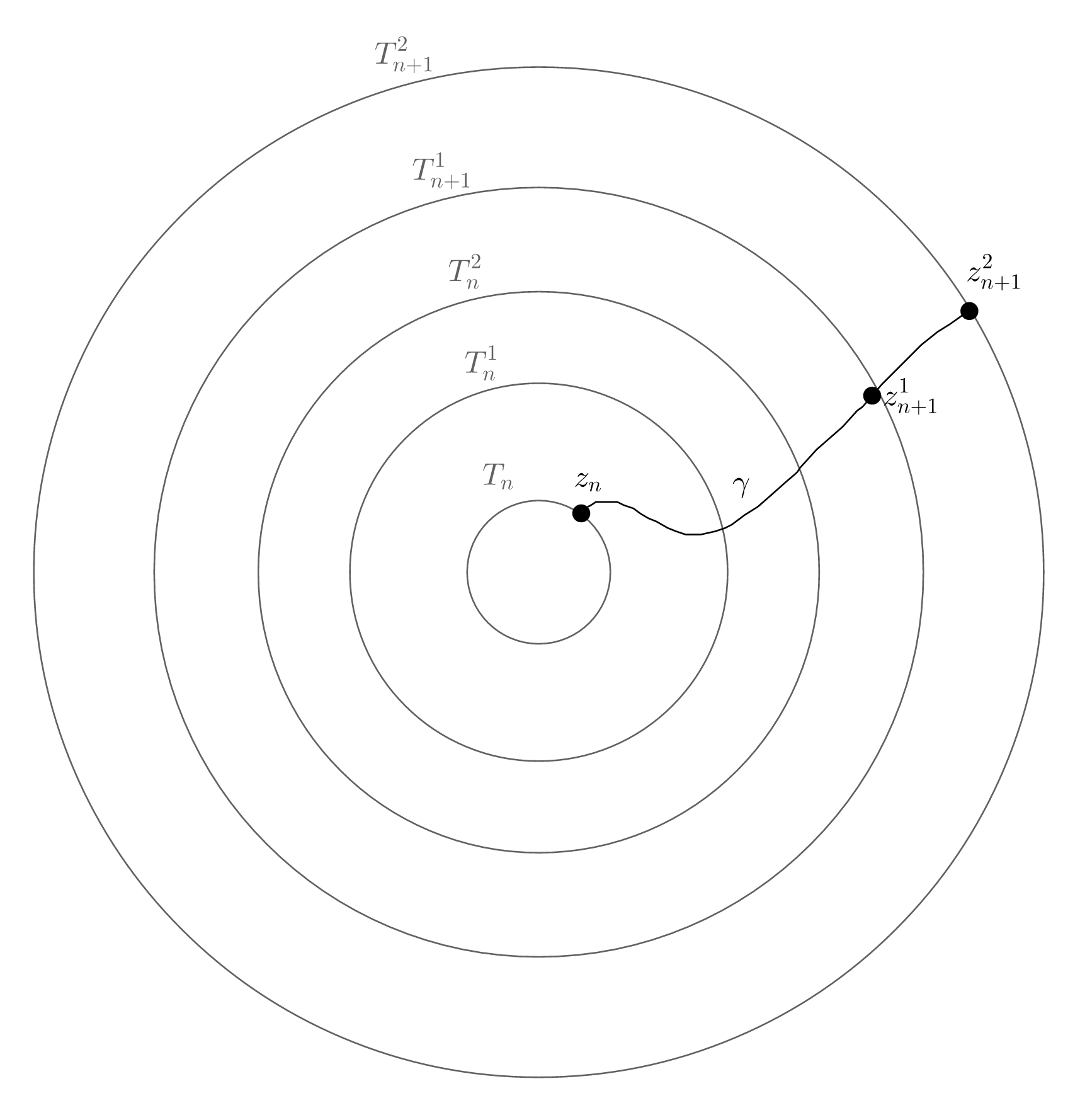}
\caption{ }
\label{fig:1}
\end{figure}

By continuing this process inductively, we get that $f^k(U)$ will contain the path $f^k\circ\gamma$ which intersects the circles $T_{n+k}$ at $z_{n+k}$ and $T_{n+k+1}^2$ at $z_{n+k+1}^2$.
Therefore, $f^k$ takes a value of modulus at least  $S_{n+k}$ on $\gamma$. 
This shows that $\{f^k\}$ goes to infinity locally uniformly on $U$. 
This, in particular, implies that there exists $N_0\in\mathbb{N}$ such that for any $k>N_0$ and for every $z\in\gamma([0,1])$, we have $|f^k(z)|>1$.
Applying \cite[Lemma 5]{baker1} to the compact set $\gamma([0,1])$, for all $z,w\in \gamma([0,1])$, we have
\[|f^k(z)|<B|f^k(w)|^C \text{ for every }k>N_0.\] 
Now, for any $k>N_0$, we can choose $u_k,u_k^2\in\gamma([0,1])$ such that $f^k(u_k)=z_{n+k}$ and $f^k(u_k^2)=z_{n+k+1}^2$. 
Hence, we have $|z_{n+k+1}^2|<|z_{n+k}|^s$ for any $k>N_0$, i.e.,

 \[\left(\log R_{n+k+1}^{\frac{1}{2\alpha}R_{n+k+1}^{\frac{1}{2\alpha}}}\right)^{\alpha}< B S_{n+k}^C \text{ for any } k>N_0.\]
 
Now, using the relation between the sequences $R_n$ and $S_n$, we have
 \[M(S_{n+k},f)=S_{n+k+1}< \left(\log R_{n+k+1}^{\frac{1}{2\alpha}R_{n+k+1}^{\frac{1}{2\alpha}}}\right)^{\alpha} < B S_{n+k}^C\text{ for every } k>N_0,\] 
  which is a contradiction as $f$ is a transcendental entire function.\qed\\
As a consequence, we have the following corollary giving a partial answer to Baker's question.
\begin{cor}
Suppose $f$ is a transcendental entire function of order $\rho <\frac{1}{2}$, minimal type. Then $F(f)$ has no unbounded component.
\end{cor}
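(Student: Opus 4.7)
The plan is to reduce the corollary directly to \Cref{THM1} by verifying, for functions of order $\rho < 1/2$ and minimal type, the minimum-modulus hypothesis
\[\log L(r,f) > (1-\epsilon)\log M(r,f)\]
on a set of $r$ of logarithmic density one. Once this is in hand, the conclusion that $F(f)$ has no unbounded component is immediate from \Cref{THM1}, so no new dynamical argument is required.

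To verify the hypothesis, I would invoke a strengthened form of the classical $\cos\pi\rho$ theorem of Wiman. The baseline statement is $\liminf_{r\to\infty}\log L(r,f)/\log M(r,f)\ge \cos\pi\rho$, which for $\rho<1/2$ already gives a positive constant but not an arbitrary $1-\epsilon$. The extra input needed is the known refinement for minimal type: since $\log M(r,f)=o(r^\rho)$, the exceptional set in Wiman's estimate can be pushed away, and in fact $\log L(r,f)/\log M(r,f)\to 1$ outside an $r$-set of logarithmic density zero. (This is standard Polya/Besicovitch/Hayman territory for small-growth entire functions; it may be deduced from the Carleman-type representation $\log|f(z)|$ of a minimal-type function of order $<1/2$, together with the fact that for such functions the zero-counting function $n(r,0)$ has lower order at most $\rho$ and minimal type, so the Poisson kernel estimate controls the deviation of $\log|f|$ from its average on almost every circle.) I would quote this refinement from the literature (e.g. Hayman's subharmonic-function estimates, or Barry's minimum-modulus theorem in the minimal-type regime) rather than rederive it.

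Given the lemma from the literature, for any prescribed $\epsilon>0$ the set
\[E_\epsilon:=\bigl\{r\ge 1:\log L(r,f)\le (1-\epsilon)\log M(r,f)\bigr\}\]
has logarithmic density zero. This is exactly the hypothesis of \Cref{THM1}, so that theorem applies and yields that $F(f)$ has no unbounded component.

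The main obstacle I foresee is purely bibliographic/technical: pinning down a reference that gives the refinement $\log L(r,f)/\log M(r,f)\to 1$ (rather than just $\ge \cos\pi\rho$) on a set of full logarithmic density for order $<1/2$ minimal type, in the exact form needed. If the cleanest available statement is formulated in terms of finite logarithmic measure of the exceptional set, that still implies logarithmic density zero, so the reduction goes through; the only care needed is to check that the quoted minimum-modulus theorem indeed controls the exceptional set in the sense required by \Cref{THM1}, and not merely in a weaker density sense.
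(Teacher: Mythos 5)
Your reduction fails at its key step: the ``refinement'' of the $\cos\pi\rho$ theorem that you propose to quote from the literature does not exist --- it is false. For order $\rho\in(0,\tfrac12)$, minimal type does \emph{not} improve the minimum-modulus constant from $\cos\pi\rho$ to $1-\epsilon$. Concretely, take a canonical product $f(z)=\prod_k(1+z/a_k)$ with zeros on the negative real axis whose counting function satisfies $n(t)\sim t^{\rho}/\log t$. Then $\log M(r,f)\sim\frac{\pi}{\sin\pi\rho}\,r^{\rho}/\log r$, so $f$ has order $\rho<\tfrac12$ and minimal type; but $L(r,f)=|f(-r)|$, and the classical Valiron asymptotics for regularly distributed zeros give $\log L(r,f)/\log M(r,f)\to\cos\pi\rho<1$ as $r\to\infty$. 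Hence for any $\epsilon<1-\cos\pi\rho$ the set where $\log L(r,f)\le(1-\epsilon)\log M(r,f)$ has logarithmic density \emph{one}, not zero: the hypothesis of \Cref{THM1} fails outright for a function covered by the corollary. (Barry's theorem, which you invoke, has the same limitation: it yields the constant $\cos\pi\beta$ with $\beta$ above the lower order, which is bounded away from $1$ unless the lower order is $0$.) So the corollary cannot be obtained by applying \Cref{THM1} as a black box, and no bibliographic search will close this gap.

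The paper's own proof avoids this by re-entering the \emph{proof} of \Cref{THM1} rather than its statement: the hypothesis $\log L>(1-\epsilon)\log M$ off a density-zero set is used only in Step 1, to produce $\alpha>1$ and $r_0$ such that for every $r\ge r_0$ there exists $\sigma\in[r,r^{\alpha}]$ with $L(\sigma,f)\ge M(r,f)$. For functions of order $\rho<\tfrac12$, minimal type, this Step 1 conclusion holds directly by Baker's theorem \cite{baker2}, and Steps 2--4 then run verbatim. Note the essential difference between the two statements: Baker's result compares the minimum modulus at some \emph{larger} radius $\sigma$ with the maximum modulus at $r$, whereas your claim requires $L$ and $M$ to be comparable at the \emph{same} radius --- and at a fixed radius the sharp constant for order $\rho$ is $\cos\pi\rho$, independent of type. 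If you want a clean reduction-to-a-theorem argument, the statement you should reduce to is not \Cref{THM1} but the variant of it whose hypothesis is the Step 1 property itself.
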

\begin{proof}
As $\rho< \frac{1}{2}$, then by \cite{baker2}, the conclusion of the Step 1 of the proof of \Cref{THM1} is already satisfied. This gives us that $F(f)$ has no unbounded component.
\end{proof}
\section{Acknowledgement}
The research of the author is supported by the National Board for Higher Mathematics, India.


\begin{thebibliography}{00}
\bibitem{anderson} J. M. Anderson\ and\ A. Hinkkanen, Unbounded domains of normality, Proc. Amer. Math. Soc. {\bf 126} (1998), no.~11, 3243--3252. 
\bibitem{baker2} I. N. Baker, Zusammensetzungen ganzer Funktionen, Math. Z. {\bf 69} (1958), 121--163. 
\bibitem{baker1} I. N. Baker, The iteration of polynomials and transcendental entire functions, J. Austral. Math. Soc. (Series A) \textbf{30} (1981), 483-495.

\bibitem{beardon} A. F. Beardon, Iteration of Rational Functions, vol. 132 of
Graduate Texts in Mathematics, Springer, New York, NY, USA,
1991.
\bibitem{berg} W. Bergweiler, Iteration of meromorphic functions, Bull. Amer. Math. Soc. 29 (1993) 151–188.
\bibitem{fuchs} W. H. J. Fuchs, Proof of a conjecture of G. P\'{o}lya concerning gap series, Illinois J. Math. {\bf 7} (1963), 661--667.
\bibitem{hinka} A. Hinkkanen, Entire functions with no unbounded Fatou components, in Complex Analysis and Dynamical Systems II, Contemp. Math. \textbf{382}, Amer. Math. Soc., Providence, RI, 2005, pp. 217–226.
\bibitem{hua} X. H. Hua, C.C. Yang, Dynamics of Transcendental Functions, Gordon and Breach Science Pub., 1998.
\bibitem{langley} J. K. Langley, \textit{Postgraduate notes on complex analysis,} 2007.
\bibitem{murai} T. Murai, The deficiency of entire functions with Fej\'{e}r gaps, Ann. Inst. Fourier (Grenoble) {\bf 33} (1983), no.~3, 39--58. 
\bibitem{rippon} P. J. Rippon\ and\ G. M. Stallard, Functions of small growth with no unbounded Fatou components, J. Anal. Math. {\bf 108} (2009), 61--86. 
\bibitem{Singh} A. P. Singh, Composite entire functions with no unbounded
Fatou components, Journal of Mathematical Analysis and Applications,
vol. 335, no. 2, pp. 907–914, 2007.
\bibitem{stallard} G. M. Stallard, The iteration of entire functions of small growth, Math. Proc. Cambridge Philos. Soc. {\bf 114} (1993), no.~1, 43--55. 
\bibitem{Wang1} Y. Wang, Bounded domains of the Fatou set of an entire function, Israel J. Math. 121 (2001) 55–60.
\bibitem{Wang2} Y. Wang, On the Fatou set of an entire function with gaps, Tohoku Math. J. 53 (2001) 163–170.
\bibitem{zheng} Jian-Hua. Zheng, Unbounded domains of normality of entire functions of small growth, Math. Proc. Cambridge Philos. Soc. {\bf 128} (2000), no.~2, 355--361. 


\end{thebibliography}
\end{document}